\newtheorem{thm}{Theorem}
\newtheorem{lemma}[thm]{Lemma}
\newtheorem{prop}[thm]{Proposition}
\numberwithin{equation}{section}
\newenvironment{prooftitle}[1]{{\noindent \textbf{Proof #1.}}\\}
\title{Theoretical properties of the log-concave maximum likelihood estimator of a multidimensional density}
\author{Madeleine Cule and Richard Samworth\thanks{Address for correspondence: Dr Richard Samworth, Statistical Laboratory, Centre for Mathematical Sciences, Wilberforce Road, Cambridge, CB3 0WB, UK.  Email: r.samworth@statslab.cam.ac.uk}\\University of Cambridge, UK}
\date{}
\begin{document}

\maketitle

\begin{abstract}
We present theoretical properties of the log-concave maximum likelihood estimator of a density based on an independent and identically distributed sample in $\mathbb{R}^d$.  Our study covers both the case where the true underlying density is log-concave, and where this model is misspecified.  We begin by showing that for a sequence of log-concave densities, convergence in distribution implies much stronger types of convergence -- in particular, it implies convergence in Hellinger distance and even in certain exponentially weighted total variation norms.  In our main result, we prove the existence and uniqueness of a log-concave density that minimises the Kullback--Leibler divergence from the true density over the class all log-concave densities, and also show that the log-concave maximum likelihood estimator converges almost surely in these exponentially weighted total variation norms to this minimiser.  In the case of a correctly specified model, this demonstrates a strong type of consistency for the estimator; in a misspecified model, it shows that the estimator converges to the log-concave density that is closest in the Kullback--Leibler sense to the true density.   
\end{abstract}

\section{Introduction}

Although work on shape-constrained density estimation dates back to the celebrated paper of \citet{Grenander1956} on the estimation of a decreasing density on the positive half-line, it is in recent years that the area has enjoyed its most significant interest.  This is partly because algorithmic and technological advances now allow the computation of estimators that would not previously have been feasible, and partly because statisticians now have more tools at their disposal for the study of the theoretical properties of these estimators.  

The attraction of the use of these estimators is that, in contrast to alternative nonparametric density estimation methods such as those based on kernels or wavelets, they provide fully automatic procedures, with no smoothing parameters to choose.  Such procedures are particularly desirable when the data are multidimensional, and the choice of (often multiple) smoothing parameters is particularly problematic.

The properties of the Grenander estimator are now quite well understood, thanks to the works of \citet{MarshallProschan1965}, \citet{PrakasaRao1969}, \citet{Devroye1987}, \citet{Birge1989}, \citet{vdG1993} and \citet{BJPSW2009}.  Other examples of shape constraints on univariate densities that have been studied in the literature include convexity \citep{GJW2001,DRW2007} and $k$-monotonicity \citep{BalabdaouiWellner2008}.  It is also known that a maximum likelihood estimator does not exist over the class of unimodal densities -- cf. \citet{Birge1997}.

Log-concavity has become an intensively-studied shape constraint for densities recently -- see, for example, \citet{Walther2002}, \citet{DHR2007}, \citet{PWM2007}, \citet{DumbgenRufibach2009}, \citet{BRW2009}.  The class of univariate log-concave densities includes many common parametric families, such as the normal, $\Gamma(\alpha,\beta)$ ($\alpha \geq 1$), $\mathrm{Beta}(\alpha,\beta)$ ($\alpha, \beta \geq 1$), Weibull($\alpha$) ($\alpha \geq 1$), Gumbel, logistic and Laplace densities; see \citet{BagnoliBergstrom1989} for other examples.  Among the desirable properties enjoyed by the class are the facts that it is closed under convolution (e.g. \citet[Theorem~2.18]{DharmadhikariJoag-Dev1988}) and the taking of pointwise limits.

The existence and uniqueness of a log-concave maximum likelihood estimator $\hat{f}_n$ of a density $f_0$ based on a sample $X_1,\ldots,X_n$ in $\mathbb{R}^d$ was proved in \citet{CSS2007}.  There,  the structure of $\hat{f}_n$ was outlined and an algorithm for its computation was described.  The algorithm was implemented in the \textbf{R} package \verb'LogConcDEAD' \citep{CGS2007,CGS2009}.  

In this paper, we study the statistical properties of the estimator.  Importantly, our results do not assume that the underlying density is log-concave.  To the best of our knowledge, such results have not been obtained before even for univariate data, but are of interest because in practice it is impossible to tell from a sample of data whether the assumption of log-concavity is satisfied.  It is therefore natural to seek assurance that the estimator will behave sensibly if the condition is violated.  In our main result (cf. Theorem~\ref{Thm:Main} below), we prove under very mild conditions the existence and uniqueness of a log-concave density $f^*$ that minimises the Kullback--Leibler divergence from $f_0$ and show that there is an interval of values of $a$ for which 
\[
\int_{\mathbb{R}^d} e^{a\|x\|}|\hat{f}_n(x) - f^*(x)| \stackrel{a.s.}{\rightarrow} 0
\]
as $n \rightarrow \infty$.  Moreover, if $f^*$ is continuous, then $\sup_{x \in \mathbb{R}^d} e^{a\|x\|}|\hat{f}_n(x) - f^*(x)| \stackrel{a.s.}{\rightarrow} 0$ as $n \rightarrow \infty$.  The upper bound for the range of allowable values of $a$ is explicitly linked to the rate of tail decay of $f^*$.  In the case where $f_0$ is log-concave, it is well-known that $f^* = f_0$, so the result demonstrates the strong consistency of $\hat{f}_n$ in exponentially weighted total variation norms, and in exponentially weighted supremum norms if $f_0$ is continuous.  If the true density is not log-concave, we see that the limiting density is the one that is closest (in the Kullback--Leibler sense) to $f_0$.  As described in Section~\ref{Sec:Main} below, this result strengthens what was previously known even for the case $d=1$.  

The rest of this paper is organised as follows.  In Section~\ref{Sec:Convergence}, we study sequences of log-concave densities that converge in distribution to a limiting density, and demonstrate that the convergence must also occur in much stronger senses.  In Section~\ref{Sec:Main}, we show that, with probability one, the estimator is uniformly bounded above on $\mathbb{R}^d$, and uniformly bounded below on compact subsets in the interior of the support of the true density.  This enables us to state and prove the main result described in the previous paragraph.  Further auxiliary results can be found in the Appendix.

\section{Convergence of log-concave densities}
\label{Sec:Convergence}

We begin with an elementary lemma, whose proof is given in the Appendix.  Let $f$ be a log-concave density on $\mathbb{R}^d$.  
\begin{lemma}
\label{Lemma:ExpBound}
There exist $a > 0$ and $b \in \mathbb{R}$ such that $f(x) \leq e^{-a\|x\| + b}$ for all $x \in \mathbb{R}^d$.
\end{lemma}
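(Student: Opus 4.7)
The plan is to combine $1$D concavity along rays through a fixed interior point of $\mathrm{supp}(f)$ with the constraint $\int f = 1$ to extract a uniform exponential tail rate. Set $\phi := \log f$, which is concave. Since $f$ is a density, its (convex) support has positive Lebesgue measure and hence nonempty interior, and $\phi$ is continuous on the interior of its domain; translating if necessary, we may assume $0 \in \mathrm{int}(\mathrm{supp}(f))$ and choose $r, \delta > 0$ with $\phi \geq \log\delta$ on $\overline{B(0, r)}$. A standard argument also gives $M := \sup f < \infty$.

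For each $u \in S^{d-1}$, set $T_u := \inf\{t \geq 0 : \phi(tu) \leq \phi(0) - 1\}$; integrability together with concavity along each ray shows $T_u < \infty$ for every $u$. The key claim is a uniform bound $T_u \leq T_0$ for some finite $T_0$. Given this, concavity of $s \mapsto \phi(su)$ implies that the chord slope from $0$ to $T_u u$ (at most $-1/T_u$) majorises all slopes on $[T_u, \infty)$, so that for $t \geq T_u$,
\[
\phi(tu) \leq \phi(T_u u) - \tfrac{1}{T_u}(t - T_u) \leq \phi(0) - \tfrac{t}{T_u} \leq \phi(0) - \tfrac{t}{T_0}.
\]
Combining with $\phi \leq \log M$ for $\|x\| \leq T_0$ then yields $\phi(x) \leq -a\|x\| + b$ for all $x$, with $a := 1/T_0$ and $b := \log M + 1$.

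Establishing the uniform bound $T_0$ is the main obstacle. Suppose for contradiction there exist $u_n \in S^{d-1}$ with $T_{u_n} \to \infty$; then $\phi(tu_n) \geq \phi(0) - 1$ for all $t \in [0, T_{u_n}]$. For any $t \in [0, T_{u_n}/2]$ and $v \in u_n^\perp$ with $\|v\| \leq r/2$, the midpoint identity $tu_n + v = \tfrac{1}{2}(2tu_n) + \tfrac{1}{2}(2v)$, combined with $2v \in \overline{B(0, r)}$ and $2t \leq T_{u_n}$, gives by log-concavity
\[
\phi(tu_n + v) \geq \tfrac{1}{2}\phi(2tu_n) + \tfrac{1}{2}\phi(2v) \geq \tfrac{1}{2}(\phi(0) - 1) + \tfrac{1}{2}\log\delta =: c_0.
\]
Hence $f \geq e^{c_0}$ on a tube of $d$-dimensional volume of order $T_{u_n} r^{d-1}$, forcing $\int f \to \infty$ and contradicting $\int f = 1$. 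The essence of the lemma is this transverse log-concavity step, which upgrades a ray-wise lower bound on $\phi$ into a tube-wise one.
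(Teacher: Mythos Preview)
Your proof is correct and follows essentially the same route as the paper: both arguments translate so that $0$ lies in the interior of the support, use that the chord slope $c \mapsto \{\phi(cu)-\phi(0)\}/c$ is non-increasing along each ray, and combine this with boundedness of $f$ to get the bound $\phi(x) \leq -a\|x\| + b$ once it is known that $\phi$ has dropped by at least $1$ at some uniform radius. The only substantive difference is that the paper obtains this uniform radius in one line by citing the preceding Lemma~\ref{Lemma:Bounded} (bounded level sets, hence $\phi(x)\to -\infty$ as $\|x\|\to\infty$), whereas you supply an explicit ``tube'' volume argument to rule out $T_{u_n}\to\infty$; your argument is self-contained and in fact reproves the relevant content of that lemma.
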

Notice in particular that if $X$ has density $f$, then Lemma~\ref{Lemma:ExpBound} implies that the moment generating function of $X$ is finite in an open neighbourhood of the origin.

If $f,f_1,f_2,\ldots$ are densities on $\mathbb{R}^d$, we write $f_n \stackrel{d}{\rightarrow} f$ for the convergence in distribution of the corresponding measures; in other words, $f_n \stackrel{d}{\rightarrow} f$ means $\int g(x)f_n(x) \, dx \rightarrow \int g(x)f(x) \, dx$ for all bounded, continuous functions $g:\mathbb{R}^d \rightarrow \mathbb{R}$.  The following result shows that when it is known that a sequence of densities is log-concave, convergence in distribution in fact implies much stronger forms of convergence.  A similar result, proved independently at around the same time and using different techniques, can be found in \citet{SHD2009}.  We write $\mu$ for Lebesgue measure on $\mathbb{R}^d$.
\begin{prop}
\label{Prop:Convergence}
Let $(f_n)$ be a sequence of log-concave densities on $\mathbb{R}^d$ with $f_n \stackrel{d}{\rightarrow} f$ for some density $f$.  Then:
\renewcommand{\theenumi}{\alph{enumi}}
\renewcommand{\labelenumi}{(\theenumi)}
\begin{enumerate}
\item $f$ is log-concave
\item $f_n \rightarrow f$, $\mu$-almost everywhere
\item Let $a_0 > 0$ and $b_0 \in \mathbb{R}$ be such that $f(x) \leq e^{-a_0\|x\| + b_0}$.  Then for every $a < a_0$, we have $\int_{\mathbb{R}^d} e^{a\|x\|}|f_n(x) - f(x)| \, dx \rightarrow 0$ and, if $f$ is continuous, $\sup_{x \in \mathbb{R}^d} e^{a\|x\|}|f_n(x) - f(x)| \rightarrow 0$.
\end{enumerate}
\end{prop}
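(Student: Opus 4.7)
The plan is to first secure uniform control of the sequence $(f_n)$, then deduce each part by a compactness argument or dominated convergence. The central claim I would work toward is a uniform exponential tail bound: for every $a < a_0$, there exist constants $A > a$ and $B \in \mathbb{R}$ such that $f_n(x) \leq e^{-A\|x\| + B}$ for all $n$ and all $x$ (so in particular $\sup_n \|f_n\|_\infty < \infty$). The ingredients are that $f_n \stackrel{d}{\rightarrow} f$ implies tightness and hence a uniform lower bound on the mass of some compact set $K$; log-concavity then propagates this to exponential decay away from $K$ at a uniform rate, by exploiting concavity of $\log f_n$ along rays emanating from a point interior to $K$ at which $f_n$ is bounded below (such a point exists by a pigeonhole/averaging argument on $K$).

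Given these uniform bounds, parts (a) and (b) follow by a standard compactness argument. On any compact $C \subset \mathrm{int}(\mathrm{supp}(f))$, the family $(f_n|_C)$ is pointwise bounded (from the tail bound) and equi-Lipschitz, since bounded concave functions on a convex open set are equi-Lipschitz on interior compacta. Arzel\`a--Ascoli combined with a compact exhaustion then yields, for every subsequence of $(f_n)$, a further subsequence converging locally uniformly on $\mathrm{int}(\mathrm{supp}(f))$ to some function $g$, which is log-concave as a pointwise limit of log-concave functions. The uniform tail bound enables dominated convergence against bounded continuous test functions and identifies $g = f$ almost everywhere. Uniqueness of the subsequential limit promotes the convergence to the whole sequence, establishing (a) and (b).

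For part (c), fix $a < a_0$. The uniform bound $f_n(x) \leq e^{-A\|x\| + B}$ with $A > a$, together with the analogous bound on $f$, yields the integrable envelope $e^{a\|x\|}|f_n(x) - f(x)| \leq 2 e^{(a - A)\|x\| + B}$ on $\mathbb{R}^d$. Pointwise a.e.\ convergence from (b) and dominated convergence then deliver $\int_{\mathbb{R}^d} e^{a\|x\|}|f_n - f| \, d\mu \rightarrow 0$. For the supremum statement (assuming $f$ continuous), I would split $\mathbb{R}^d$ into a large ball and its complement: on the complement the uniform exponential tail dominates the weighted difference and the contribution is uniformly small, while on the ball the equi-Lipschitz / Arzel\`a--Ascoli argument supplies uniform convergence of $f_n$ to the continuous limit $f$. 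The main obstacle is securing the uniform tail bound with a rate $A > a$ for arbitrary $a < a_0$. A crude argument from tightness alone would give only some fixed uniform rate $A_0 > 0$, potentially much smaller than $a_0$; closing the gap requires transferring the decay rate of $f$ itself back to $f_n$ for large $n$, for instance by bounding $\int_{\{\langle v, x\rangle > t\}} f_n$ for each unit vector $v$ using weak convergence, converting this to a pointwise bound via log-concavity, and handling the finitely many remaining $n$ individually via Lemma~\ref{Lemma:ExpBound}.
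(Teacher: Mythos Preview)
Your overall architecture---uniform exponential envelope, then compactness via Arzel\`a--Ascoli, then dominated convergence---is a plausible route and genuinely different from the paper's. The paper never invokes Arzel\`a--Ascoli; its engine for parts~(a) and~(b) is the Bhattacharya--Rao theorem that weak convergence to an absolutely continuous limit implies $\sup_{D \in \mathcal{D}} |\int_D (f_n - f)| \to 0$ over all convex sets $D$, combined with Lebesgue's differentiation theorem. It compares the Lebesgue measures of the convex level sets $\{f_n \geq t\}$ directly against the behaviour of $f$ near a putative bad point, and derives contradictions to the convex-set convergence. For part~(c) the paper also uses this convex-set convergence, rather than half-space tail masses, to push the uniform decay rate up to any $a < a_0$.

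There is, however, a real gap in your compactness step. You assert that $(f_n|_C)$ is equi-Lipschitz on a compact $C \subset \mathrm{int}(\mathrm{supp}\,f)$ ``since bounded concave functions on a convex open set are equi-Lipschitz on interior compacta.'' But the $f_n$ are log-concave, not concave, and a uniform \emph{upper} bound on log-concave functions does not yield equi-Lipschitz control: the family $g_n(x) = C e^{-n\|x\|}$ is uniformly bounded by $C$ yet has unbounded gradients at the origin. What you actually need is a two-sided bound on $\log f_n$ on a neighbourhood of $C$, i.e.\ a uniform \emph{lower} bound $f_n \geq c > 0$ there for all large $n$. Your pigeonhole/averaging argument produces only an $n$-dependent point $p_n \in K$ with $f_n(p_n) \geq c$, which is enough for the tail bound (since $p_n$ stays in a fixed compact) but not for equi-Lipschitz on a fixed $C$. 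One repair is to run the averaging argument on $d+1$ small balls in $\mathrm{int}(\mathrm{supp}\,f)$ whose centres are affinely independent and whose radii are small enough that every simplex with one vertex in each ball contains a common fixed ball $B^*$; log-concavity then forces $f_n \geq c$ on $B^*$ for large $n$, and from there a bootstrapping over compacta of $\mathrm{int}(\mathrm{supp}\,f)$ is possible. This is exactly the kind of bookkeeping the paper's convex-set approach sidesteps.

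Your diagnosis of the obstacle in part~(c)---that crude tightness yields only some fixed decay rate $A_0$, possibly far below $a_0$---is accurate, and your proposed remedy (transfer the decay of $f$ back to $f_n$ via half-space or convex-set masses) is along the right lines; the paper carries this out by a careful contradiction argument on truncated cones, again driven by the convex-set convergence.
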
 
\begin{proof}
(a) This part of the proposition can be deduced from Theorems~2.8 and~2.10 of \citet{DharmadhikariJoag-Dev1988}. Their proof relies on a non-trivial correspondence between log-concave probability measures and log-concave densities, which in turn depends on several other facts about log-concavity -- cf. \citet[][pp.46--54]{DharmadhikariJoag-Dev1988}.  We give an alternative proof because it is perhaps a little more direct, and because it forms part of the proof of part~(b) below.

Let $f_n \stackrel{d}{\rightarrow} f$.  Our proof relies on two crucial results.  The first is that if $\mathcal{D}$ is the class of all Borel-measurable, convex subsets of $\mathbb{R}^d$, then 
\[
\sup_{D \in \mathcal{D}} \biggl|\int_D (f_n - f)\biggr| \rightarrow 0
\]
as $n \rightarrow \infty$ \citep[Theorem~1.11, p.22]{BhattacharyaRao1976}.  The second is Lebesgue's differentiation theorem: recall that a family $\{A_\delta:\delta > 0\}$ of Borel subsets of $\mathbb{R}^d$ \emph{shrinks nicely} to $x_0 \in \mathbb{R}^d$ with eccentricity bound $\eta > 0$ if $A_\delta \subseteq B_\delta$, where $B_\delta$ is the closed (Euclidean) ball of radius $\delta$ centered at $x_0$, and if the family is such that $\mu(A_\delta) > \eta \mu(B_\delta)$ for all $\delta > 0$.  Then for $\mu$-almost all $x_0$, we have
\begin{equation}
\label{Eq:Leb}
\frac{1}{\mu(A_\delta)} \int_{A_\delta} |f(x) - f(x_0)| \, dx \rightarrow 0
\end{equation}
as $\delta \rightarrow 0$, for every family $\{A_\delta:\delta > 0\}$ that shrinks nicely to $x_0$ \citep[][Theorem~3.21]{Folland1999}.  Points $x_0$ at which this equality holds are called \emph{Lebesgue points} of $f$; notice that any continuity point of $f$ is certainly a Lebesgue point of $f$.  In fact, it will be helpful to note the following small generalisation: if we have a sequence $\{A_{k,\delta}:k \in \mathbb{N},\delta > 0\}$ of families that shrink nicely to $x_0$ with the same eccentricity bound $\eta$, then the convergence in~(\ref{Eq:Leb}) is uniform in $k$. 

Consider $E_1 = \{x \in \mathbb{R}^d: \liminf f_n(x) < f(x)\}$, and suppose for a contradiction that $\mu(E_1) > 0$.  Then there exists a Lebesgue point $x_0$ of $f$ satisfying $x_0 \in E_1$ and $f(x_0) < \infty$.  Letting $\epsilon = f(x_0) - \liminf f_n(x_0)$, find a subsequence $(f_{n_k})$ with $f_{n_k}(x_0) < f(x_0) - 3\epsilon/4$, and for $\delta > 0$, define the convex sets
\[
D_{k,\delta} = \{x \in B_\delta: f_{n_k}(x) \geq f(x_0) - \epsilon/2\}.
\]
Observe by the concavity of $\log f_{n_k}$ that if $x \in D_{k,\delta}$ then $2x_0 - x \in B_{\delta} \setminus D_{k,\delta}$.  It follows that $\mu(B_\delta \setminus D_{k,\delta}) \geq \mu(B_{\delta})/2$.  This means that we can apply Lebesgue's differentiation theorem to choose $\delta > 0$ small enough that for every $k$,
\[
\frac{1}{\mu(B_\delta \setminus D_{k,\delta})} \biggl|\int_{B_\delta \setminus D_{k,\delta}} \{f(x_0) - f(x)\} \, dx \biggr| \leq \frac{\epsilon}{8}.
\]
But then
\begin{align*}
\int_{B_{\delta}} (f - f_{n_k}) &= \int_{B_\delta \setminus D_{k,\delta}} \! \! \{f(x) - f(x_0) + f(x_0) - f_{n_k}(x)\} \, dx + \int_{D_{k,\delta}} (f - f_{n_k}) \\
&\geq -\frac{\epsilon}{8}\mu(B_{\delta}) + \frac{\epsilon}{4}\mu(B_{\delta}) - \sup_{D \in \mathcal{D}} \biggl|\int_{D} (f - f_{n_k})\biggr|.
\end{align*}
We conclude that $\liminf_k \int_{B_{\delta}} (f - f_{n_k}) \geq \epsilon\mu(B_{\delta})/4$, a contradiction.  Hence $\mu(E_1) = 0$.  

Thus, without loss of generality, we may assume $f \leq \liminf_n f_n$. But by Fatou's lemma,
\[
1 = \int f \leq \int \liminf f_n \leq \liminf \int f_n = 1,
\]
so in fact we may assume $f = \liminf f_n$.  Since $\liminf f_n$ is log-concave, this proves~(a).

(b) Now suppose that $\mu(E_2) > 0$, where $E_2 = \{x \in \mathbb{R}^d: \limsup f_n(x) > f(x)\}$.   Since $f$ is log-concave and so continuous almost everywhere, let $x_0 \in E_2$ be continuity point of $f$.  Define $\epsilon_0 \in (0,\infty]$ by $\epsilon_0 =  \limsup f_n(x_0) - f(x_0)$ and set $\epsilon = \min(1,\epsilon_0)$.  We can find a subsequence $(f_{n_k})$ with $f_{n_k}(x_0) > f(x_0) + 3\epsilon/4$ for all $k$.  Define the convex sets
\[
\tilde{D}_{k,\delta} = \{x \in B_\delta: f_{n_k}(x) \geq f(x_0) + \epsilon/2\}.
\]
There are three cases to consider:

\emph{Case (i)}: Suppose the sequence $\{\tilde{D}_{k,\delta}:k \in \mathbb{N},\delta > 0\}$ of families shrink nicely to $x_0$ with the same eccentricity bound $\eta$.  Find $\delta > 0$ such that $|f(x) - f(x_0)| \leq \epsilon/4$ for all $x \in B_{\delta}$.  Then, for every $k$,
\[
\int_{\tilde{D}_{k,\delta}} (f_{n_k} - f) \geq \frac{\mu(\tilde{D}_{k,\delta})\,\epsilon}{4} \geq \frac{\mu(B_{\delta})\,\eta\,\epsilon}{4},
\]
contradicting our first crucial result.

\emph{Case (ii)}: Suppose we are not in Case~(i), and that $f(x_0) > 0$, so that by reducing $\epsilon$ if necessary we may assume $f(x_0) > \epsilon/2$.  In this case, since for each $k$ the ratio $\mu(\tilde{D}_{k,\delta})/\mu(B_\delta)$ is decreasing as $\delta$ increases, there exist $\delta > 0$ and positive integers $k(1) < k(2) < \ldots$ such that 
\[
\frac{\mu(\tilde{D}_{k(l),\delta})}{\mu(B_\delta)} \leq \frac{t^d}{2},
\]
where 
\[
t = \frac{\log \bigl(f(x_0) + 3\epsilon/4\bigr) - \log \bigl(f(x_0) + \epsilon/2\bigr)}{\log \bigl(f(x_0) + 3\epsilon/4\bigr) - \log \bigl(f(x_0) - \epsilon/2\bigr)}.
\]
It is straightforward to show, using the concavity of $\log f_{n_k}$, that $\mu(D_{k,\delta}) \leq \mu(\tilde{D}_{k,\delta})/t^d$, where as above,
\[
D_{k,\delta} = \{x \in B_\delta: f_{n_k}(x) \geq f(x_0) - \epsilon/2\}.
\]
We may also assume that $|f(x) - f(x_0)| \leq \epsilon/4$ for all $x \in B_{\delta}$.  We conclude that for all $l$,
\begin{align*}
\int_{B_\delta} (f_{n_{k(l)}} - f) &= \int_{B_\delta \setminus \tilde{D}_{k(l),\delta}}  (f_{n_{k(l)}} - f) + \int_{D_{k(l),\delta}}  (f_{n_{k(l)}} - f) \\
&\leq -\frac{\epsilon}{4}\mu(B_\delta \setminus D_{k(l),\delta}) + \sup_{D \in \mathcal{D}} \int_D (f_{n_{k(l)}} - f),
\end{align*}
so $\limsup_l \int_{B_\delta} (f_{n_{k(l)}} - f) \leq -\frac{\epsilon}{8}\mu(B_\delta)$, a contradiction.  

\emph{Case (iii)}: Finally, if $f(x_0) = 0$, then without loss of generality we can find $\delta > 0$ such that $f(x) = 0$ for all $x \in B_\delta$.  For each $t >0$, we have $\mu(\{x \in B_\delta:f_{n_k}(x) \geq t\}) \rightarrow 0$ as $k \rightarrow \infty$.  Choose $t > 0$ small enough that $f_{n_k}(x_0) \geq t$ and such that there exist points $x_1,\ldots,x_d$ in the interior of the effective domain of $\log f$, denoted $\mathrm{int}(\mathrm{dom} \ \log f)$, with $f(x_j) \geq 2t$ and $\mu\bigl(\mathrm{conv}\{x_0,x_1,\ldots,x_d\}\bigr) > 0$, where $\mathrm{conv}\{x_0,x_1,\ldots,x_d\}$ denotes the convex hull of $\{x_0,x_1,\ldots,x_d\}$.  As we cannot have $\mathrm{conv}\{x_0,x_1,\ldots,x_d\}$ contained in $\{x \in \mathbb{R}^d: f_{n_k}(x) \geq t\}$ eventually, there exists $j \in \{1,\ldots,d\}$ and a further subsequence $(f_{n_{k(l)}})$ such that $f_{n_{k(l)}}(x_j) < t$.  We then obtain a contradiction as in the proof of Proposition~\ref{Prop:Convergence}(a).  Hence $\mu(E_2) = 0$, as required.  This proves~(b).  

(c) Write $\phi_n = \log f_n$ and $\phi = \log f$.  Without loss of generality assume $0 \in \mathrm{int}(\mathrm{dom} \ \phi)$, and let $\eta > 0$ be small enough that $B_\eta(0)$, the closed ball of radius $\eta > 0$ about the origin, is contained in $\mathrm{int}(\mathrm{dom} \ \phi)$.  

Fix $a \in (0,a_0)$, and let $\delta = a_0 - a$.  By Lemma~\ref{Lemma:ExpBound}, we can find $R > 0$ such that $\frac{1}{\|x\|}\{\phi(x) - \phi(0)\} \leq -\bigl(a + \frac{3\delta}{4}\bigr)$ for all $\|x\| \geq R/2$.  We claim there exists $n_0$ such that
\begin{equation}
\label{Eq:Claim}
\frac{\phi_n(x) - \phi_n(0)}{\|x\|} \leq -\Bigl(a + \frac{\delta}{4}\Bigr)
\end{equation}
for all $\|x\| \geq R$ and $n \geq n_0$.  Note that since, for each $n$, the ratio on the left-hand side of~(\ref{Eq:Claim}) is a decreasing function of $\|x\|$, it suffices to prove that the inequality in~(\ref{Eq:Claim}) holds for all $\|x\|=R$ and $n \geq n_0$.  This is straightforward to see if the ball of radius $R$ about the origin is in $\mathrm{int}(\mathrm{dom} \ \phi)$, because in that case $\phi_n \rightarrow \phi$ uniformly on this ball \citep[][Theorem~10.8]{Rockafellar1997}.  In general, however, we argue as follows.  Suppose we can find a subsequence $(\phi_{n_k})$ and a sequence $(x_k)$ with $\|x_k\| = R$ such that 
\[
\frac{\phi_{n_k}(x_k) - \phi_{n_k}(0)}{\|x_k\|} > -\Bigl(a + \frac{\delta}{4}\Bigr)
\]
for all $k$.  Let $C_k = A_k \cap B_k$, where $A_k = \{\lambda x_k + (1-\lambda)y:\lambda \in [0,1], y \in B_\eta(0)\}$ and $B_k = \{y \in \mathbb{R}^d: \|y - x_k\| \leq R/2\}$, so that $C_k$ is convex and $\mu(C_k) = \zeta > 0$, independent of $k$.  By reducing $\eta > 0$ if necessary, we may assume $\frac{R/3}{R/3  - \eta} \leq 1 + \frac{\delta}{8(a+\delta/4)}$ and $\eta < R/4$.  Finally, since $(\phi_{n_k})$ is equi-Lipschitzian on $B_\eta(0)$ \citep[][Theorem~10.8]{Rockafellar1997}, we may assume $\eta$ is small enough that $\phi_{n_k}(y) \geq \phi_{n_k}(0) - \frac{\delta R}{16}$ for all $y \in B_{\eta}(0)$.  Since any $x^* = \lambda x_k + (1-\lambda)y \in C_k$ has $\lambda R - \eta \leq \|x^*\| \leq R$ and $\lambda \geq 1/3$, we have
\begin{align*}
\frac{\phi_{n_k}(x^*) - \phi_{n_k}(0)}{\|x^*\|} &\geq \frac{\lambda\phi_{n_k}(x_k) + (1-\lambda) \phi_{n_k}(y) - \phi_{n_k}(0)}{\|x^*\|} \\
&\geq \frac{\lambda\phi_{n_k}(x_k) -\lambda \phi_{n_k}(0)}{\|x^*\|} - \frac{\delta}{8} \\
&\geq \frac{-\lambda(a+\frac{\delta}{4})R}{\|x^*\|} - \frac{\delta}{8} \geq -\Bigl(a + \frac{\delta}{2}\Bigr).
\end{align*}
From this we deduce that
\[
\liminf_{k \rightarrow \infty} \int_{C_k} (f_{n_k} - f) \geq \zeta\{e^{-\frac{1}{2}(a+\delta/2)R + \phi(0)} - e^{-\frac{1}{2}(a+3\delta/4)R + \phi(0)}\},
\]
contradicting the first crucial result in the proof of Proposition~\ref{Prop:Convergence}(a).  This establishes our claim at~(\ref{Eq:Claim}).  But this means there exists $b \in \mathbb{R}$ such that $\sup_{n \geq n_0} f_n(x) \leq e^{-(a + \delta/4)\|x\| +b}$.  From Proposition~\ref{Prop:Convergence}(b) and the dominated convergence theorem we conclude that 
\[
\int_{\mathbb{R}^d} e^{a\|x\|}|f_n(x) - f(x)| \, dx \rightarrow 0.
\]
Now suppose that $f$ is continuous and let $\epsilon \in (0,1)$.  Choose $R > 0$ large enough that $f(x) + \sup_{n \geq n_0} f_n(x) \leq \epsilon e^{-a\|x\|}/2$ for all $\|x\| \geq R$.  Then certainly,
\[
\sup_{\|x\| \geq R} e^{a\|x\|}|f_n(x) - f(x)| \leq \epsilon
\]
for $n \geq n_0$.  Using the continuity of $f$, we can choose a closed, convex set $S \subseteq \mathrm{int}(\mathrm{dom} \ \phi) \cap B_R(0)$ such that $f(x) < e^{-aR}/2$ for all $x \in S^c$.  Since $f_n \rightarrow f$ uniformly on $S$, we may assume $\sup_{x \in S} |f_n(x) - f(x)| \leq \epsilon e^{-aR}/2$ for all $n \geq n_0$.  Finally, for $x \in B_R(0) \setminus S$, we may assume $\epsilon > 0$ is small enough that $f_n(0) \geq \epsilon e^{-aR}$ for $n \geq n_0$.  Since $f_n(x) \leq \epsilon e^{-aR}$ for $x$ on the boundary of $S$ and $n \geq n_0$, it follows that $f_n(x) \leq \epsilon e^{-aR}$ for $x \in B_R(0) \setminus S$ and $n \geq n_0$.  We deduce that 
\[
\sup_{x \in \mathbb{R}^d} e^{a\|x\|}|f_n(x) - f(x)| \leq \epsilon
\]
for all $n \geq n_0$.
\end{proof}

\section{Theoretical properties of the log-concave maximum likelihood estimator}
\label{Sec:Main}

Let $X_1,X_2,\ldots$ be an independent and identically distributed sequence with density $f_0$ (not necessarily log-concave), and for $n \geq d+1$ let $\hat{f}_n$ denote the log-concave maximum likelihood estimator of $f_0$ based on $X_1,\ldots,X_n$.  Let $E$ denote the \emph{support} of $f_0$; that is, the smallest closed set with $\int_E f_0 = 1$.  The lemma below establishes appropriate upper and lower bounds for $\hat{f}_n$.  Part~(a) of the lemma strengthens Theorem~3.2 of \citet{PWM2007}, where for the case of univariate data and a log-concave underlying density it was shown that the random variable $\sup_{n \in \mathbb{N}} \sup_{x \in \mathbb{R}^d} \hat{f}_n(x)$ is finite with probability one.  To the best of our knowledge, lower bounds such as that appearing in part~(b) have not been studied before.  
\begin{lemma}
\label{Lemma:Bound}
Suppose that $\int_{\mathbb{R}^d} \|x\|f_0(x) \, dx < \infty$.
\renewcommand{\theenumi}{\alph{enumi}}
\renewcommand{\labelenumi}{(\theenumi)}
\begin{enumerate}
\item There exists a constant $C > 0$ such that, with probability one,
\[
\limsup_{n \rightarrow \infty} \sup_{x \in \mathbb{R}^d} \hat{f}_n(x) \leq C.
\]
\item Let $S$ be a compact subset of $\mathrm{int}(E)$.  There exists a constant $c > 0$ such that, with probability one,
\[
\liminf_{n \rightarrow \infty} \inf_{x \in \mathrm{conv} \, S} \hat{f}_n(x) \geq c.
\] 
\end{enumerate}
\end{lemma}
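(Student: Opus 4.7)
Both parts concern uniform (in $n$) $L^\infty$ bounds on $\hat{f}_n$. My plan for part~(a) combines the MLE inequality (which, via a comparison with a fixed exponentially-decaying log-concave density, gives a lower bound on the empirical log-likelihood) with the subdifferential characterisation of $\hat{f}_n$ and with the uniform convergence over convex sets used already in Section~\ref{Sec:Convergence}, to rule out arbitrarily tall narrow peaks of $\hat{f}_n$. My plan for part~(b) uses~(a) together with the SLLN to locate, in each ball of a suitable cover of $\mathrm{conv}\,S$, a data point at which $\hat{f}_n$ is bounded below; log-concavity then propagates the lower bound across $\mathrm{conv}\,S$.

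\textbf{Part~(a).} First, I would compare $\hat{f}_n$ with the fixed log-concave density $g(x) = c_\alpha e^{-\alpha\|x\|}$, where $c_\alpha$ is the normalising constant. Since $\int |\log g|\,f_0\,dx < \infty$ by the first moment assumption, the MLE inequality together with the SLLN yields, eventually almost surely,
\[
\frac{1}{n}\sum_{i=1}^n \log \hat{f}_n(X_i) \;\geq\; \frac{1}{n}\sum_{i=1}^n \log g(X_i) \;\geq\; -K_0,
\]
for some deterministic $K_0 > 0$. Second, I would exploit the subdifferential characterisation of the multivariate log-concave MLE: for every concave function $\eta$ on $\mathrm{conv}\{X_1,\dots,X_n\}$, $\int \eta\,\hat{f}_n\,dx \geq \tfrac{1}{n}\sum_i \eta(X_i)$. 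Taking $\eta$ affine yields the mean identity $\int x\,\hat{f}_n\,dx = \bar{X}_n$; taking $\eta(x) = -\|x\|$ yields $\int\|x\|\,\hat{f}_n\,dx \leq \tfrac{1}{n}\sum_i\|X_i\|$; and taking $\eta = \log\hat{f}_n$ yields the entropy bound $H(\hat{f}_n) \leq K_0$. Third, I would argue by contradiction: if $M_n := \sup\hat{f}_n \to \infty$ along a subsequence, then the convex level sets $L_n^K := \{\hat{f}_n \geq M_n e^{-K}\}$ satisfy $\mu(L_n^K) \leq e^K/M_n \to 0$, and by \citet[Theorem~1.11]{BhattacharyaRao1976} together with absolute continuity of $P_0$, $P_n(L_n^K) \to 0$ for each fixed $K$. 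Combining this with the mean identity (which confines the mode of $\hat{f}_n$ to a bounded region around $\bar{X}_n$) and the upper bound on $\int\|x\|\,\hat{f}_n\,dx$, a suitable comparison of $\hat{f}_n$ with a log-concave perturbation (replacing the peak by a plateau at height $M_n/e$ and renormalising) would produce a contradiction with the likelihood lower bound.

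\textbf{Part~(b).} Having~(a), so that $\sup\hat{f}_n \leq C$ eventually almost surely, I combine with the likelihood lower bound to obtain, for every $\alpha > 0$ and large $n$,
\[
-K_0 \;\leq\; \frac{|A_{n,\alpha}|}{n}\log\alpha + \Bigl(1-\frac{|A_{n,\alpha}|}{n}\Bigr)\log C, \qquad A_{n,\alpha} := \{i:\hat{f}_n(X_i) < \alpha\},
\]
so that $|A_{n,\alpha}|/n \leq (K_0 + \log C)/(\log C - \log\alpha) \to 0$ as $\alpha \downarrow 0$. Since $S$ is compact and $S \subset \mathrm{int}(E)$, I can cover $T := \mathrm{conv}\,S$ by finitely many $d$-simplices whose vertices $\{y_{j,k}\}$ lie in $\mathrm{int}(E)$, and around each vertex take a small open ball $B_{j,k} \subset \mathrm{int}(E)$ with $P_0(B_{j,k}) > 0$; the SLLN then gives $P_n(B_{j,k}) \geq P_0(B_{j,k})/2$ eventually. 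Choosing $\alpha > 0$ small enough that the bad fraction above is strictly less than $\min_{j,k} P_0(B_{j,k})/2$, each $B_{j,k}$ contains a data point $X_{i_{j,k}}$ with $\hat{f}_n(X_{i_{j,k}}) \geq \alpha$, and log-concavity yields $\hat{f}_n \geq \alpha$ on $\mathrm{conv}\{X_{i_{j,k}}\}_j$; shrinking the $B_{j,k}$ if necessary, these simplices still cover $T$, completing the argument.

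\textbf{Main obstacle.} The principal difficulty is producing the \emph{upper} bound on $\sup\hat{f}_n$ in~(a): the MLE inequality and the KKT conditions readily yield \emph{lower} bounds on $\sup\hat{f}_n$ and \emph{upper} bounds on the spread of $\hat{f}_n$, both going the wrong way for the conclusion. Ruling out narrow, tall peaks therefore requires exploiting the interplay between Bhattacharya--Rao (forcing tall peaks to sit on very small level sets with negligible $P_n$-mass), the mean identity (pinning the mode near $\bar{X}_n$), and the structural upper bound on $\int\|x\|\,\hat{f}_n\,dx$, via a carefully chosen log-concave competitor. Once~(a) is in hand, part~(b) is essentially bookkeeping.
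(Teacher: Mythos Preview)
Your approach differs substantially from the paper's in both parts.

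For Part~(a), the paper does not use the KKT/subdifferential characterisation at all; the mean identity and the moment bound $\int\|x\|\hat f_n\le\tfrac1n\sum_i\|X_i\|$ play no role. Instead, the paper compares directly with $g(x)=e^{-\|x\|+b}$, sets $k=\int f_0\log g-1$, and proves a level-set volume estimate: if $\sup f=e^M$, then the \emph{low} level set $D_{2k-M}=\{f\ge e^{2k-M}\}$ satisfies $\mu(D_{2k-M})\le 2^{d+3}(M-k)^d e^{-M}$, obtained from $1\ge\int_{2k-M}^M e^s\mu(D_s)\,ds$ together with the concavity inequality $\mu(D_s)\ge\bigl(\tfrac{M-s}{2(M-k)}\bigr)^d\mu(D_{2k-M})$. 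Absolute continuity of $P_0$ then allows $M$ to be chosen so that $P_0(D_{2k-M})\le\tfrac14$; since $\log f<2k-M$ outside $D_{2k-M}$, the event $\{\tfrac1n\sum_i\log f(X_i)>k\}$ forces at least half the data into $D_{2k-M}$, which Hoeffding (uniformly over $f$, via Glivenko--Cantelli on convex sets) rules out for large $n$. No competitor construction is needed.

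Your plateau-competitor route can in fact be completed, but the proposal omits the decisive estimate and leans on the wrong ingredients. What you actually need is a uniform lower bound $1-Z_n\ge\eta>0$ on the mass removed by capping $\hat f_n$ at $M_n/e$; this follows from the level-set inequalities $\lambda^d\mu(L_n^1)\le\mu(L_n^\lambda)$ for $\lambda\in[0,1]$ and $\mu(L_n^\lambda)\le\lambda^d\mu(L_n^1)$ for $\lambda\ge1$, which together pin $M_n\mu(L_n^1)$ between two positive constants depending only on $d$. Then $-\log Z_n\ge\eta$ while the log-likelihood loss at peak data points is at most $P_n(L_n^1)\to0$, giving the contradiction. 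Neither the mean identity nor the first-moment bound enters here, so drop them. Note also that your ``fixed-$K$'' observation alone is insufficient: outside $L_n^K$ you only know $\log\hat f_n<\log M_n-K$, which still diverges with $M_n$, so without the $1-Z_n$ estimate (or the paper's low-level set at height $e^{2k-M}$) no contradiction follows.

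For Part~(b), your covering-by-simplices argument is a legitimate alternative. The paper instead argues by reflection: if $\inf_S f\le c/2$ at some $x_0\in S$, then log-concavity forces $\{x\in S^\delta:f(x)\le c\}$ to contain the reflection of $\{f>c\}$ through $x_0$, hence a half-ball of radius $\delta$, so it has $P_0$-mass at least some $p>0$; choosing $c$ with $\tfrac{p}{2}\log c+(1-\tfrac{p}{2})\log C\le k$, Hoeffding again shows such $f$ cannot beat $g$. Your route trades this geometric reflection for a simplex cover plus pigeonholing on the bad fraction $|A_{n,\alpha}|/n$; both work.
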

\begin{proof}
(a) Let $g(x) = \exp(-\|x\|+b)$, where the normalisation constant $b$ is chosen to ensure $g$ is a density, so that 
\[
\int f_0 \log g = -\int \|x\| f_0(x) \, dx + b \equiv k + 1,
\]
say.  Now let $C = e^M$, where $M$ is large enough that $M > k+1$ and such that $\int_D f_0 \leq 1/4$ whenever $\mu(D) \leq 2^{d+3}(M - k)^d e^{-M}$.  Let $f$ be any log-concave density with $\sup_{x \in \mathbb{R}^d} f(x) = C$.  We claim that, for sufficiently large $n$, the log-concave density $g$ has higher log-likelihood.  More precisely, if `i.o.' stands for `infinitely often', we claim that
\[
\mathbb{P}\biggl(\frac{1}{n}\sum_{i=1}^n \log f(X_i) > \frac{1}{n}\sum_{i=1}^n \log g(X_i) \ \mathrm{i.o.}\biggr) = 0.
\]
The result follows immediately from this claim.  To establish the claim, write $\phi = \log f$, and observe that
\begin{align}
\label{Eq:i.o.}
\mathbb{P}\biggl(\frac{1}{n}&\sum_{i=1}^n \phi(X_i) > \frac{1}{n}\sum_{i=1}^n \log g(X_i) \ \mathrm{i.o.}\biggr) \\
&\leq \mathbb{P}\biggl(\biggl|\frac{1}{n}\sum_{i=1}^n \log g(X_i) - (k+1)\biggr| > 1 \ \mathrm{i.o.}\biggr) + \mathbb{P}\biggl(\frac{1}{n}\sum_{i=1}^n \phi(X_i) > k \ \mathrm{i.o.}\biggr). \nonumber
\end{align}
The first term on the right-hand side of~(\ref{Eq:i.o.}) is zero, by the strong law of large numbers.

To prove the second term on the right-hand side of~(\ref{Eq:i.o.}) is zero, the idea is to show that there is only a small set on which $\phi$ is large, so with high probability only a small proportion of the observations are in this set.  To this end, let $D_t = \{x \in \mathbb{R}^d: \phi(x) \geq t\}$.  By concavity of $\phi$, for $t \in [2k-M,M]$, we have
\[
\mu(D_t) \geq \Bigl(\frac{M-t}{2M-2k}\Bigr)^d \mu(D_{2k-M}).
\]
It follows by Fubini's theorem that
\begin{align*}
1 &\geq \int_{\mathbb{R}^d} f \mathbbm{1}_{\{\log f \geq 2k - M\}} = \int_{\mathbb{R}^d} \int_0^{e^M} \mathbbm{1}_{\{t \leq f(x)\}} \, dt \mathbbm{1}_{\{\log f(x) \geq 2k - M\}} \, dx \\
&\geq \int_{2k - M}^M e^s \mu(D_s) \, ds \geq \frac{\mu(D_{2k-M})}{2^d(M-k)^d} \int_{2k-M}^M (M-s)^d e^s \, ds \geq  \frac{\mu(D_{2k-M})e^M}{2^{d+3}(M-k)^d}.
\end{align*}
Thus $\mathbb{P}(X_1 \in D_{2k-M}) = \int_{D_{2k-M}} f_0 \leq 1/4$.  We deduce that
\[
\mathbb{P}\biggl(\frac{1}{n}\sum_{i=1}^n \phi(X_i) > k\biggr) \leq \mathbb{P}\biggl(\frac{1}{n}\sum_{i=1}^n \mathbbm{1}_{\{X_i \in D_{2k-M}\}} \geq \frac{1}{2}\biggr) \leq e^{-n/8},
\]
by Hoeffding's inequality.  The first Borel--Cantelli lemma then completes the proof of~(a).

(b) By the concavity of $\log \hat{f}_n$, it suffices to prove the conclusion of this part of the lemma when the infimum over $x \in \mathrm{conv} \ S$ is replaced with an infimum over $x \in S$.  Let $S$ be a compact subset of $\mathrm{int}(E)$ and let $\delta > 0$ be small enough that $S^\delta = \{x \in \mathbb{R}^d: \mathrm{dist}(x,S) \leq \delta\}$ is 
contained in $\mathrm{int}(E)$.  Consider the function $x_0 \mapsto \int_{B_{\delta/2}} f_0$, where $B_\delta$ again denotes the closed ball of radius $\delta$ centered at $x_0$.  This function is positive and continuous on $S^{\delta/2}$, so attains its (positive) infimum on this compact set.  Thus there exists $p > 0$ such that $\int_B f_0 \geq p$ for any Borel subset $B$ of $\mathbb{R}^d$ containing a ball of radius $\delta/2$ centered at a point in $S^{\delta/2}$.  

Now let $f$ be any log-concave density on $\mathbb{R}^d$, and let $c = 2\inf_{x \in S} f(x)$.  We show that if $c \geq 0$ is sufficiently small, then $f$ is not the maximum likelihood estimator for large $n$.  By Lemma~\ref{Lemma:Bound}(a), we may assume $\sup_{x \in \mathbb{R}^d} f(x) \leq C$.  Recall that the density $g$ was defined by $g(x) = e^{-\|x\|+b}$, where $b$ is a normalisation constant, and that $k = -\int \|x\| f_0(x) \, dx + b - 1$.  Suppose $c \in [0,C]$ is small enough that $\frac{p}{2}\log c + \bigl(1-\frac{p}{2}\bigr)\log C \leq k$.  Writing $B = \{x \in S^\delta: f(x) \leq c\}$, we note that $B$ contains a point $x_0 \in S$, and if $x \notin B$, then $2x_0 - x \in B$.  Thus $B$ contains a ball of radius $\delta/2$ centered at a point in $S^{\delta/2}$, so $\int_B f_0 \geq p$.  Thus, if $\phi = \log f$, then
\[
\mathbb{P}\biggl(\frac{1}{n}\sum_{i=1}^n \phi(X_i) > k\biggr) \leq \mathbb{P}\biggl(\frac{1}{n}\sum_{i=1}^n \mathbbm{1}_{\{X_i \in B\}} \leq \frac{p}{2}\biggr) \leq e^{-np^2/2},
\]
again by Hoeffding's inequality.  By the first Borel--Cantelli lemma, and arguing as in the proof of Lemma~\ref{Lemma:Bound}(a) above, we conclude that 
\[
\mathbb{P}\biggl(\frac{1}{n}\sum_{i=1}^n \log f(X_i) > \frac{1}{n}\sum_{i=1}^n \log g(X_i) \ \mathrm{i.o.}\biggr) = 0.
\]
\end{proof}

Our next theorem is the main result in this paper and establishes desirable performance properties of the log-concave maximum likelihood estimator.  We first recall that the Kullback--Leibler divergence of a density $f$ from $f_0$ is given by
\[
d_{KL}(f_0,f) = \int_{\mathbb{R}^d} f_0 \log \frac{f_0}{f}.
\]
Jensen's inequality shows that the Kullback--Leibler divergence is non-negative, and equal to zero if and only if $f=f_0$ (almost everywhere).  Thus in the case where $f_0$ is log-concave, Theorem~\ref{Thm:Main} below shows that the log-concave maximum likelihood estimator $\hat{f}_n$ is strongly consistent in certain exponentially weighted total variation metrics.  Convergence in exponentially weighted supremum norms also follows if $f_0$ is continuous.  The theorem strengthens known results even in the univariate case, which include Corollary~1 of \citet{PWM2007}, where it was proved that $\hat{f}_n$ is strongly consistent in Hellinger distance, and Corollary~4.2 of \citet{DumbgenRufibach2009}, where (weak) consistency of $\hat{f}_n$ in the unweighted total variation distance was established.  (The observation that the mode of convergence in the univariate consistency result of Corollary~4.2 of \citet{DumbgenRufibach2009} could be strengthened was also made independently at around the same time in \citet{SHD2009}.)

In the case where the model is misspecified, i.e. $f_0$ is not log-concave, we prove that the existence and uniqueness of a log-concave density $f^*$ that minimises the Kullback--Leibler divergence from $f_0$.  Moreover, we show that the log-concave maximum likelihood estimator $\hat{f}_n$ converges in the same senses as in the previous paragraph to $f^*$.  The natural practical interpretation is that provided $f_0$ is not too far from being log-concave, the estimator is still sensible.  

We write $\log_{+} x = \max(\log x,0)$ and recall that $E$ denotes the support of $f_0$.   
\begin{thm}
\label{Thm:Main}
Let $f_0$ be any density on $\mathbb{R}^d$ with $\int_{\mathbb{R}^d} \|x\| f_0(x) \, dx < \infty$, $\int_{\mathbb{R}^d} f_0 \log_{+} f_0 < \infty$ and $\mathrm{int}(E) \neq \emptyset$.  There exists a log-concave density $f^*$, unique almost everywhere, that minimises the Kullback--Leibler divergence of $f$ from $f_0$ over all log-concave densities $f$.  Taking $a_0 > 0$ and $b_0 \in \mathbb{R}$ such that $f^*(x) \leq e^{-a_0\|x\| + b_0}$, we have for any $a < a_0$ that 
\[
\int_{\mathbb{R}^d} e^{a\|x\|}|\hat{f}_n(x) - f^*(x)| \, dx \stackrel{a.s.}{\rightarrow} 0,
\]
and, if $f^*$ is continuous, $\sup_{x \in \mathbb{R}^d} e^{a\|x\|}|\hat{f}_n(x) - f^*(x)| \stackrel{a.s.}{\rightarrow} 0$.

\end{thm}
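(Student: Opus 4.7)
The plan is to separate the theorem into three components: existence and uniqueness of $f^*$, almost sure tightness of $(\hat f_n)$ giving subsequential weak limits, and identification of each such limit with $f^*$; Proposition~\ref{Prop:Convergence}(c) then upgrades weak convergence to the exponentially weighted statements.  For existence, let $L = \inf_f d_{KL}(f_0,f)$ over log-concave densities.  First, $L < \infty$ by plugging in $g(x) = e^{-\|x\|+b}$ and using $\int \|x\| f_0\,dx < \infty$ together with $\int f_0 \log_+ f_0 < \infty$.  For a minimizing sequence $(f_m)$, I would show $\sup_m \sup f_m < \infty$ and $\inf_m \inf_{x \in S} f_m > 0$ for compact $S \subset \mathrm{int}(E)$ by adapting the deterministic level-set/reflection computations in the proofs of Lemma~\ref{Lemma:Bound}(a) and~(b): if either bound fails, those estimates yield $\int f_0 \log f_m \to -\infty$, contradicting $d_{KL}(f_0,f_m) \to L$.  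From the sup bound, $\int f_m = 1$, and log-concavity (a convex-hull argument: $\{f_m \geq c/2\}$ is convex with uniformly bounded diameter, whence $\log f_m$ decays linearly at a uniform rate outside) we obtain tightness of $(f_m)$.  Extract $f_{m_k} \stackrel{d}{\rightarrow} \tilde f$; Proposition~\ref{Prop:Convergence} gives that $\tilde f$ is log-concave and $f_{m_k} \to \tilde f$ in weighted $L^1$, while a lower-semicontinuity argument (Fatou applied to $\log f_0 - \log f_{m_k}$, say) shows $d_{KL}(f_0,\tilde f) \leq L$, so $f^* := \tilde f$ attains the infimum.  Uniqueness follows from a Hölder argument: if $f_1, f_2$ are minimizers with $\phi_i = \log f_i$ and $Z_t = \int e^{(1-t)\phi_1+t\phi_2}$, then
\[
d_{KL}\bigl(f_0,\tfrac{1}{Z_t}e^{(1-t)\phi_1+t\phi_2}\bigr) = (1-t)d_{KL}(f_0,f_1) + t d_{KL}(f_0,f_2) + \log Z_t,
\]
and Hölder gives $Z_t \leq 1$ with equality iff $f_1 = f_2$ almost everywhere; minimality of the left-hand side forces equality.

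By Lemma~\ref{Lemma:Bound}, with probability one $\hat f_n$ is eventually uniformly bounded above and uniformly bounded below on compact subsets of $\mathrm{int}(E)$, so the same tightness argument gives that $(\hat f_n)$ is almost surely tight.  Along any subsequence, extract a further subsequence $\hat f_{n_k} \stackrel{d}{\rightarrow} \tilde f$, with $\tilde f$ a bona fide density thanks to the compact-set lower bound; Proposition~\ref{Prop:Convergence} yields log-concavity of $\tilde f$ plus weighted $L^1$ (and, if $\tilde f$ is continuous, supremum) convergence.  For the identification, the MLE inequality gives
\[
\frac{1}{n_k}\sum_{i=1}^{n_k}\log \hat f_{n_k}(X_i) \;\geq\; \frac{1}{n_k}\sum_{i=1}^{n_k}\log f^*(X_i),
\]
whose right-hand side tends almost surely to $\int f_0 \log f^*$ by the strong law (integrability uses $\sup f^* < \infty$, $\int f_0 \log_+ f_0 < \infty$, and $d_{KL}(f_0,f^*)<\infty$).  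For the left-hand side I would split the sum at $\|X_i\| = R$: on $\{\|X_i\|\leq R\}$ use weighted uniform or $L^1$ convergence $\hat f_{n_k}\to\tilde f$ together with a Glivenko--Cantelli-type argument to approximate by $\int_{B_R} f_0 \log \tilde f + o(1)$; on $\{\|X_i\|>R\}$ use the uniform tail bound $\hat f_{n_k}(x) \leq e^{-a\|x\|+b}$ produced inside the proof of Proposition~\ref{Prop:Convergence}(c), so that together with $\int \|x\| f_0\,dx < \infty$ the tail contribution is uniformly small as $R\to\infty$.  This yields $\int f_0 \log \tilde f \geq \int f_0 \log f^*$, i.e.\ $d_{KL}(f_0,\tilde f)\leq L$, so uniqueness forces $\tilde f = f^*$.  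Every subsequential limit being $f^*$, the whole sequence $\hat f_n$ converges to $f^*$, and Proposition~\ref{Prop:Convergence}(c) supplies the weighted $L^1$ and supremum conclusions of the theorem.

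The main obstacle is the identification step: controlling $\frac{1}{n}\sum \log \hat f_n(X_i)$ in the limit demands a uniform exponential tail bound on $\hat f_n$ that is not directly supplied by Lemma~\ref{Lemma:Bound}(a) and must be extracted by rerunning the argument of Proposition~\ref{Prop:Convergence}(c) along the convergent subsequence.  A secondary subtlety is that Lemma~\ref{Lemma:Bound} is stated for the MLE, so for the existence of $f^*$ one must re-do its proofs for a generic KL-minimizing sequence, replacing the Hoeffding/Borel--Cantelli step by direct comparison with $\int f_0 \log g$.
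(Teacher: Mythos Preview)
Your treatment of existence and uniqueness is essentially the paper's: a KL-minimising sequence, bounds adapted from the proof of Lemma~\ref{Lemma:Bound}, tightness and Prohorov, Fatou for lower semicontinuity of $f\mapsto d_{KL}(f_0,f)$; your H\"older interpolation specialises at $t=1/2$ to the Cauchy--Schwarz argument the paper gives.

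The convergence argument, however, departs from the paper and contains a real gap. The paper does \emph{not} attempt to control $\tfrac{1}{n}\sum_i\log\hat f_n(X_i)$ directly. It introduces the regularisation $\log(b+\hat f_n)$ (the device of \citet{PWM2007}), which is bounded once $\hat f_n\leq C$, and uses the layer-cake identity
\[
\log(1+f/b)=\int_0^{\log(1+C/b)}\mathbbm{1}_{\{f\geq b(e^t-1)\}}\,dt
\]
to write $\int\log(b+\hat f_n)\,d(\hat{\mathbb F}_n-F_0)$ as an integral of $(\hat{\mathbb P}_n-P_0)$ over convex level sets, to which the uniform Glivenko--Cantelli theorem for convex sets applies. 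This yields a uniform law of large numbers over \emph{all} bounded log-concave densities at once, which is what absorbs the data-dependence of $\hat f_n$; the conclusion is then routed through Lemma~\ref{Lemma:DiffConv}, which itself carries out the tightness-and-subsequence extraction you propose, but starting from the $b$-regularised integral inequality rather than a raw comparison of $\int f_0\log\tilde f$ with $\int f_0\log f^*$.

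Your identification step does not supply a substitute for this uniformity. The subsequential limit $\tilde f$ is extracted per realisation and is therefore random, so you cannot invoke the strong law for $\tfrac{1}{n_k}\sum_i\log\tilde f(X_i)$; nor can you simply replace $\log\hat f_{n_k}$ by $\log\tilde f$ on $B_R$, because uniform convergence of the logarithms requires a lower bound on $\hat f_{n_k}$ that Lemma~\ref{Lemma:Bound}(b) supplies only on compact subsets of $\mathrm{int}(E)$, not on all of $B_R$. What is actually needed is a uniform law of large numbers for $\log f$ (or some truncation thereof) over log-concave $f$, and the paper's $+b$/layer-cake device is precisely the tool that delivers it. The obstacle you single out at the end --- a uniform exponential tail bound on $\hat f_n$ --- is real but secondary; the missing ingredient is the uniform empirical-process control, without which the comparison $\int f_0\log\tilde f\geq\int f_0\log f^*$ cannot be established from the MLE inequality.
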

\textbf{Remark:} The conditions on the underlying density $f_0$ are very weak indeed.  The first condition asks for a finite mean, while the second is satisfied by any bounded density, as well as a wide class of unbounded densities.  The third condition is also very weak, but it may help to give an example where it fails: let $(q_n)$ be an enumeration of the rationals in $[0,1]$, and let $f_0 \propto \mathbbm{1}_E$, where $E = [0,1] \setminus \cup_{n=1}^\infty (q_n-\frac{1}{n^2},q_n+\frac{1}{n^2})$.  In this case $\mathrm{int}(E) = \emptyset$.   
\begin{proof}
By the two integrability conditions, the log-concave density $g(x) = e^{-\|x\|+b}$ , where $b$ is a normalisation constant, satisfies $d_{KL}(f_0,g) < \infty$.  We can therefore pick a minimising sequence of log-concave densities $(f_n)$ for the Kullback--Leibler divergence from $f_0$; in other words, the sequence $(f_n)$ satisfies
\[
d_{KL}(f_0,f_n) \rightarrow \inf_{f \in \mathcal{F}_0} d_{KL} (f_0,f),
\]
where $\mathcal{F}_0$ denotes the class of all log-concave densities.  A slightly simpler version of the argument given in the proof of Lemma~\ref{Lemma:Bound}(a) shows that there exists $C > 0$ such that $f_n \leq C$ for all $n$.  Similarly, a small modification to the argument in the proof of Lemma~\ref{Lemma:Bound}(b) shows that for every compact subset $S$ of $\mathrm{int}(E)$, there exists $c > 0$ such that
\[
\liminf_{n \rightarrow \infty} \inf_{x \in \mathrm{conv} \, S} f_n(x) \geq c.
\]
We claim that the sequence $(f_n)$ is tight (or more precisely that the sequence of probability measures corresponding to the sequence of densities is tight).  To see this, let $S$ be a compact subset in $\mathrm{int}(E)$, and choose $c > 0$ such that $\inf_{x \in S} f_n(x) \geq c$ for large $n$.  Without loss of generality we assume $0 \in S$ and $\mu(S) > 0$.  Now, for $R$ sufficiently large, we must have $\limsup_{n \rightarrow \infty} \sup_{\|x\| > R} f_n(x) \leq c/2$, as otherwise the Lebesgue measure of the convex level sets $\{x \in \mathbb{R}^d: f_n(x) > c/2\}$ would be too large for each $f_n$ to be a density.  It follows that there exist $a_0 > 0$ and $b_0 \in \mathbb{R}$ such that $\sup_{n \in \mathbb{N}} f_n(x) \leq e^{-a_0\|x\| + b_0}$, and tightness of the sequence follows.

Prohorov's theorem~\citep[][Theorem~5.1]{Billingsley1999} therefore guarantees the existence of a probability measure $\nu^*$ such that a subsequence $(f_{n_k})$ converges in distribution to $\nu^*$.  Now, given $\epsilon > 0$, choose $\delta = \epsilon/(2C)$.  If $A$ is a Borel subset of $\mathbb{R}^d$ with $\mu(A) \leq \delta$, then since Lebesgue measure is regular we can find an open set $A' \supseteq A$ such that $\mu(A') \leq 2\delta$.  Now 
\[
\nu^*(A) \leq \nu^*(A') \leq \liminf_{k \rightarrow \infty} \int_{A'} f_{n_k} \leq C\mu(A') \leq \epsilon.
\]
Thus $\nu^*$ is absolutely continuous with respect to $\mu$, and we may write $f^*$ for its density with respect to $\mu$.  By Proposition~\ref{Prop:Convergence}(a), $f^*$ is log-concave, and by Proposition~\ref{Prop:Convergence}(b), $f_{n_k} \rightarrow f^*$ almost everywhere.  Finally, by Fatou's lemma, we have
\begin{align*}
d_{KL}(f_0,f^*) = \int f_0 (\log f_0 - \log f^*) &\leq \liminf_{k \rightarrow \infty} \int f_0 (\log f_0 - \log f_{n_k}) \\
&= \inf_{f \in \mathcal{F}_0} d_{KL}(f_0,f).
\end{align*}
Thus $f^*$ does indeed minimise the Kullback--Leibler divergence from $f_0$ over the class of log-concave densities.

Suppose now that both $f_1^*$ and $f_2^*$ minimise the Kullback--Leibler divergence from $f_0$ over the class of log-concave densities.  Defining 
\[
f^* = \frac{(f_1^* f_2^*)^{1/2}}{\int (f_1^* f_2^*)^{1/2}},
\]
we see that $f^*$ is a log-concave density with 
\[
d_{KL}(f_0,f^*) = d_{KL}(f_0,f_1^*) + \log \int (f_1^* f_2^*)^{1/2} \leq d_{KL}(f_0,f_1^*),
\]
by the Cauchy--Schwarz inequality, with equality if and only if $f_1^* = f_2^*$, $\mu$-almost everywhere.  This proves the claimed uniqueness property of $f^*$.

Now, write $F_0$ for the distribution function corresponding to the density $f_0$ and $P_0$ for the distribution on $\mathbb{R}^d$ induced by $F_0$.  Similarly, write $\hat{\mathbb{F}}_n$ for the empirical distribution function of $X_1,\ldots,X_n$, and $\hat{\mathbb{P}}_n$ for the corresponding empirical measure.  By definition of $\hat{f}_n$, we have for any $b > 0$ that
\begin{align}
\label{Eq:BasicExp}
0 &\leq \int_{\mathbb{R}^d} \log (b+\hat{f}_n) \, d\hat{\mathbb{F}}_n - \int_{\mathbb{R}^d} \log f^* \, d\hat{\mathbb{F}}_n \nonumber \\
&= \int_{\mathbb{R}^d} \! \log (b+\hat{f}_n) \, d(\hat{\mathbb{F}}_n - F_0) + \int_{\mathbb{R}^d} \! \log \biggl(\frac{b+\hat{f}_n}{b+f^*}\biggr) \, dF_0 + \int_{\mathbb{R}^d} \! \log \biggl(\frac{b+f^*}{f^*}\biggr) \, dF_0 \nonumber \\
&\hspace{8cm}+ \int_{\mathbb{R}^d} \log f^* \, d(F_0 - \hat{\mathbb{F}}_n).
\end{align}
The idea of adding the small constant $b > 0$ in this calculation first appeared in \citet{PWM2007}.  We first derive an appropriate uniform law of large numbers to handle the first term on the right hand side of~(\ref{Eq:BasicExp}).  By Lemma~\ref{Lemma:Bound}(a), we may assume that $\hat{f}_n \leq C$.  Recall that $\mathcal{D}$ denotes the class of all Borel-measurable convex subsets of $\mathbb{R}^d$.  For any log-concave density $f$ with $f \leq C$, we have by Fubini's theorem that
\begin{align*}
\int_{\mathbb{R}^d} \log (b+f) \, d(\hat{\mathbb{F}}_n - F_0) &= \int_{\mathbb{R}^d} \log(1+f/b) \, d(\hat{\mathbb{F}}_n - F_0) \\
&= \int_{\mathbb{R}^d} \int_0^{\log(1+C/b)} \mathbbm{1}_{\{t \leq \log(1+f/b)\}} \, dt \, d(\hat{\mathbb{F}}_n - F_0) \\
&= \int_0^{\log(1+C/b)} (\hat{\mathbb{P}}_n - P_0)(\{x:f(x) \geq b(e^t-1)\}) \, dt \\
&\leq \log \Bigl(1+\frac{C}{b}\Bigr)\sup_{D \in \mathcal{D}} (\hat{\mathbb{P}}_n - P_0)(D) \stackrel{a.s.}{\rightarrow} 0
\end{align*}
as $n \rightarrow \infty$.  Hence
\[
\int_{\mathbb{R}^d} \! \log (b+\hat{f}_n) \, d(\hat{\mathbb{F}}_n - F_0) \stackrel{a.s.}{\rightarrow} 0
\]
as $n \rightarrow \infty$.

Combining this result with an application of the strong law of large numbers to the fourth term on the right-hand side of~(\ref{Eq:BasicExp}), we deduce that with probability one,
\[
\limsup_{n \rightarrow \infty} \int_{\mathbb{R}^d} \! \log \biggl(\frac{b+f^*}{b+\hat{f}_n}\biggr) \, dF_0 \leq  \int_{\mathbb{R}^d} \! \log \biggl(\frac{b+f^*}{f^*}\biggr) \, dF_0.
\]
It follows by the monotone convergence theorem that with probability one,
\[
\limsup_{b \searrow 0} \limsup_{n \rightarrow \infty} \int_{\mathbb{R}^d} \! \log \biggl(\frac{b+f^*}{b+\hat{f}_n}\biggr) \, dF_0 \leq 0.
\]
Lemma~\ref{Lemma:DiffConv} in the Appendix allows us to deduce from this that  $\int_{\mathbb{R}^d} |\hat{f}_n - f^*| \stackrel{a.s.}{\rightarrow} 0$, so the full result follows by Proposition~\ref{Prop:Convergence}.
\end{proof}

\section{Appendix}

Before proving Lemma~\ref{Lemma:ExpBound}, we first derive a basic property of a log-concave density $f$.  Recall that the epigraph of a concave function $\phi:\mathbb{R}^d \rightarrow [-\infty,\infty)$ is the set
\[
\{(x,\mu):x \in \mathbb{R}^d, \mu \in \mathbb{R}, \mu \leq \phi(x)\}.
\]
The closure of $\phi$, denoted $\mathrm{cl}(\phi)$, is the concave function whose epigraph is the closure in $\mathbb{R}^{d+1}$ of the epigraph of $\phi$.  The functions $\phi$ and $\mathrm{cl}(\phi)$ agree almost everywhere, and we say $\phi$ is closed if $\phi = \mathrm{cl}(\phi)$. 
\begin{lemma}
\label{Lemma:Bounded}
A log-concave density $f$ is bounded above and the version of $f$ that is closed attains its maximum.
\end{lemma}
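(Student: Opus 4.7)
The plan is to prove (a) by a direct volume estimate from log-concavity, and then to leverage the same estimate in (b) to show that every positive super-level set of $f$ is bounded, whence the USC closure of $\log f$ attains the supremum at the limit of a convergent subsequence.

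For (a): since $\int f = 1$, the effective domain $D = \{x : f(x) > 0\}$ is a convex set of positive Lebesgue measure and so has non-empty interior, and $\log f$ is continuous on $\mathrm{int}(D)$ because concave functions are continuous on the interior of their effective domain; hence we can find $x_0$, $r > 0$, and $\delta > 0$ with $f \geq \delta$ throughout $B(x_0, r)$. For any $z \in \mathbb{R}^d$, log-concavity yields $f\bigl(\tfrac{z+y}{2}\bigr) \geq \sqrt{f(z)\,f(y)} \geq \sqrt{\delta\,f(z)}$ for every $y \in B(x_0, r)$, so $f \geq \sqrt{\delta f(z)}$ throughout the translated half-size ball $B\bigl(\tfrac{x_0+z}{2},\tfrac{r}{2}\bigr)$. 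Integrating and rearranging then yields a uniform upper bound $f(z) \leq \delta^{-1}\bigl(c_d(r/2)^d\bigr)^{-2}$, where $c_d$ denotes the volume of the unit ball in $\mathbb{R}^d$.

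For (b), let $M = \sup f < \infty$ and fix $t > 0$. Applying the same construction with $z$ replaced by an arbitrary point of $\{f \geq t\}$ shows that $\{f \geq \sqrt{\delta t}\}$ contains a translate of $B(x_0, r/2)$ centered at $(x_0+z)/2$. If $\{f \geq t\}$ were unbounded, we could extract a subsequence $(z_n)$ with the centers $(x_0+z_n)/2$ pairwise separated by more than $r$, producing infinitely many disjoint balls of radius $r/2$ inside the convex set $\{f \geq \sqrt{\delta t}\}$; this would force the set to have infinite Lebesgue measure, contradicting integrability of $f$. Hence every positive super-level set of $f$ is bounded. The compact sets $K_n := \overline{\{f \geq M - 1/n\}}$ are then non-empty and nested decreasing, so $\bigcap_n K_n \neq \emptyset$; any $x^*$ in this intersection is a limit of points $y_n$ with $f(y_n) \to M$, so $\mathrm{cl}(\log f)(x^*) \geq \log M$. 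Combined with $\mathrm{cl}(\log f) \leq \sup \log f = \log M$ (the closure is the USC envelope, hence pointwise bounded above by the supremum), the closed version of $f$ attains its maximum $M$ at $x^*$.

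The main technical point is the step in (b) that rules out unbounded super-level sets: one must justify that a convex set containing infinitely many disjoint balls of fixed positive radius has infinite Lebesgue measure. The cleanest route is the standard fact that any full-dimensional unbounded convex set in $\mathbb{R}^d$ has infinite measure---its recession cone is non-trivial, so it contains a ray, and this together with any full-dimensional interior ball produces a thickened ray of infinite volume. With this in hand, the remainder is routine bookkeeping on top of the direct volume estimate from (a).
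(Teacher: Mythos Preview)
Your argument is correct and takes a genuinely different route from the paper's. The paper's proof is a two-line appeal to convex analysis: it observes that the closed version of $\log f$ can have no direction of increase (else some super-level set would be $d$-dimensional, convex and unbounded, hence of infinite Lebesgue measure, contradicting $\int f = 1$), and then invokes Theorem~27.2 of Rockafellar, which guarantees that a closed concave function with no directions of increase attains its finite supremum. Your approach is entirely self-contained: the volume estimate $f(z) \leq \delta^{-1}\bigl(c_d(r/2)^d\bigr)^{-2}$ gives boundedness directly, and the nested-compact-sets argument replaces the black-box Rockafellar result. What the paper's route buys is brevity; what yours buys is elementarity and an explicit quantitative bound on $\sup f$ in terms of a lower bound on a single ball.

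One small simplification: in your final paragraph you worry about justifying that a convex set containing infinitely many disjoint balls of fixed radius has infinite measure, and propose the recession-cone argument. But convexity is irrelevant here---\emph{any} set containing infinitely many pairwise disjoint balls of radius $r/2$ has Lebesgue measure at least $N\,c_d(r/2)^d$ for every $N$, hence infinite measure. Since $\{f \geq t\}$ unbounded lets you pick $z_n$ with $\|z_n\|\to\infty$ and (after thinning) pairwise separation exceeding $2r$, the centres $(x_0+z_n)/2$ are pairwise separated by more than $r$ and the disjointness is immediate. The recession-cone detour can be dropped.
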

\begin{proof}
Without loss of generality, we may assume $\log f$ is closed.  It has no directions of increase, because otherwise there would exist $\epsilon \in \mathbb{R}$ such that the set $\{x \in \mathbb{R}^d:\log f(x) \geq \epsilon\}$ were $d$-dimensional, convex and unbounded (so of infinite Lebesgue measure).  Theorem~27.2 of \citet{Rockafellar1997} therefore gives that $\log f$ attains its (finite) maximum.
\end{proof}

We can now prove Lemma~\ref{Lemma:ExpBound}.

\begin{prooftitle}{of Lemma~\ref{Lemma:ExpBound}}
Let $\phi = \log f$.  Without loss of generality, we may assume that $0 \in \mathrm{int}(\mathrm{dom} \ \phi)$.  Since $\phi(x) \rightarrow -\infty$ as $\|x\| \rightarrow \infty$, we can find $R > 0$ such that $\phi(x) - \phi(0) \leq -1$ for all $\|x\| > R$.  But, for each $z \in \mathbb{R}^d$ with $\|z\| = 1$, we have that $\frac{1}{c}\{\phi(cz) - \phi(0)\}$ is non-increasing in $c > 0$.  Thus, with $a = 1/R$, we have $\frac{1}{c}\{\phi(cz) - \phi(0)\} \leq -a$ for $c > R$.  Now 
\[
\frac{\phi(x)}{\|x\|} = \frac{\phi(x) - \phi(0)}{\|x\|} + \frac{\phi(0)}{\|x\|} \leq -a + \frac{\phi(0)}{\|x\|}
\]
for all $\|x\| > R$.  Since $\phi$ is bounded (by Lemma~\ref{Lemma:Bounded}), the result follows by choosing $b > \phi(0)$ sufficiently large.  \hfill $\Box$
\end{prooftitle}

The following lemma is used in the proof of Theorem~\ref{Thm:Main}.  The conclusion can be immediately strengthened using Proposition~\ref{Prop:Convergence}, and is stated in this way only for brevity.
\begin{lemma}
\label{Lemma:DiffConv}
Let $f_0$ be any density on $\mathbb{R}^d$ with $\int_{\mathbb{R}^d} \|x\| f_0(x) \, dx < \infty$, $\int_{\mathbb{R}^d} f_0 \log_{+} f_0 < \infty$ and $\mathrm{int}(E) \neq \emptyset$.  Let $f^*$ be a log-concave density that minimises the Kullback--Leibler divergence from $f_0$ over the class of log-concave densities.  If $(f_n)$ is a sequence of log-concave densities satisfying
\[
\limsup_{b \searrow 0} \limsup_{n \rightarrow \infty} \int_{\mathbb{R}^d} \log \biggl(\frac{b+f^*}{b+f_n}\biggr) \, dF_0 \leq 0,
\]
then $\int_{\mathbb{R}^d} |f_n - f^*| \rightarrow 0$ as $n \rightarrow \infty$.
\end{lemma}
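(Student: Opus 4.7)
The plan is to show $f_n \to f^*$ in distribution, after which Proposition~\ref{Prop:Convergence}(c) (taking $a = 0$) upgrades this immediately to $\int |f_n - f^*|\,dx \to 0$. Weak convergence will follow by a subsequence-subsubsequence argument: I will establish tightness of $(f_n)$, extract a weakly convergent subsequence via Prohorov's theorem, identify its density-valued limit with $f^*$ using the hypothesis together with the uniqueness of the Kullback--Leibler minimizer from Theorem~\ref{Thm:Main}, and conclude that the full sequence converges to $f^*$.

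Tightness combines upper and lower bounds on $f_n$. For the upper bound, suppose to the contrary that $\sup f_n = e^{M_n}$ with $M_n \to \infty$ along a subsequence. The concavity computation in Lemma~\ref{Lemma:Bound}(a) gives $\mu(D_n) \leq 2^{d+3}(M_n - k)^d e^{-M_n}$ for $D_n = \{\log f_n \geq 2k - M_n\}$ and a fixed $k$. Splitting $F_0(D_n) \leq T\mu(D_n) + (\int f_0 \log_{+} f_0)/\log T$ with $T = e^{M_n/2}$ shows that $M_n F_0(D_n)$ stays bounded, so
\[
\int \log(b + f_n)\,dF_0 \leq M_n F_0(D_n) + \log\bigl(b + e^{2k - M_n}\bigr) + o(1) \leq K_1 + \log b + o(1)
\]
for some $K_1$ independent of $b$. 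Since $d_{KL}(f_0, f^*) < \infty$ forces $F_0(\{f^* > 0\}) = 1$, monotone convergence gives $\int \log(1 + f^*/b)\,dF_0 \to +\infty$ as $b \searrow 0$, i.e.\ $\int \log(b + f^*)\,dF_0 - \log b \to +\infty$, contradicting the hypothesis. For the lower bound on a compact $S \subset \mathrm{int}(E)$ with $\mu(S) > 0$, I pick $\delta > 0$ so $S^\delta \subset \mathrm{int}(E)$; the reflection argument from Lemma~\ref{Lemma:Bound}(b) shows that if $c_k := 2\inf_S f_{n_k} \to 0$ along a subsequence, then $B_k = \{x \in S^\delta : f_{n_k} \leq c_k\}$ has $F_0(B_k) \geq p$ for some $p > 0$ depending only on $S$ and $\delta$, so that
\[
\int \log(b + f_{n_k})\,dF_0 \leq p\log(b + c_k) + \log(b + C),
\]
where $C$ is the uniform upper bound. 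Letting $k \to \infty$ and then $b \searrow 0$ again contradicts the hypothesis, since $\int \log(b + f^*)\,dF_0 - p\log b \to +\infty$ by the same monotone convergence argument. Combined with log-concavity, the two bounds yield an envelope $f_n(x) \leq e^{-a_0\|x\| + b_0}$ for all sufficiently large $n$, exactly as in the tightness step of the proof of Theorem~\ref{Thm:Main}.

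By Prohorov's theorem, any subsequence of $(f_n)$ admits a further subsequence $(f_{n_k})$ converging in distribution to some probability measure; the uniform upper bound ensures this measure is absolutely continuous, with log-concave density $\tilde f$ (Proposition~\ref{Prop:Convergence}(a)) and $f_{n_k} \to \tilde f$ almost everywhere (Proposition~\ref{Prop:Convergence}(b)). For each fixed $b > 0$, $\log(b + f_{n_k})$ is uniformly bounded, so dominated convergence gives $\int \log(b + f_{n_k})\,dF_0 \to \int \log(b + \tilde f)\,dF_0$, and the hypothesis then yields $\int \log\bigl((b + f^*)/(b + \tilde f)\bigr)\,dF_0 \leq \eta$, for every $\eta > 0$, whenever $b$ is sufficiently small. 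Since $d_{KL}(f_0, f^*)$ is finite, $\int \log(b + f^*)\,dF_0 \searrow \int \log f^*\,dF_0 > -\infty$ as $b \searrow 0$, and the preceding inequality forces $\int \log(b + \tilde f)\,dF_0$ to stay bounded, so that $\int \log \tilde f\,dF_0 \geq \int \log f^*\,dF_0$; equivalently, $d_{KL}(f_0, \tilde f) \leq d_{KL}(f_0, f^*)$. The uniqueness clause of Theorem~\ref{Thm:Main} therefore forces $\tilde f = f^*$ almost everywhere, and the subsequence principle yields $f_n \to f^*$ in distribution.

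The main obstacle is the uniform upper bound in the tightness step: the weak integrability assumption $\int f_0 \log_{+} f_0 < \infty$ must be leveraged to force $F_0(D_n)$ to decay fast enough that $M_n F_0(D_n)$ remains bounded despite $M_n \to \infty$, and the truncation level $T = e^{M_n/2}$ is tuned precisely for this trade-off.
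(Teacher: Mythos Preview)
Your proof is correct and follows the same architecture as the paper's: establish uniform upper and lower bounds on $(f_n)$, deduce tightness, extract convergent subsequences via Prohorov, identify the limit as $f^*$ using uniqueness of the Kullback--Leibler minimiser, and conclude via the subsequence principle.

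The one noteworthy difference is in the upper-bound step. The paper introduces the Orlicz space $L^{\Phi}$ with Young function $\Phi(x) = (1+|x|)\log(1+|x|) - |x|$ and uses the H\"older-type bound $\int_D f_0 \leq 2\|f_0\|_{\Phi}/(-\log\mu(D))$ to control $\int_{\{f \geq 1\}} \log f\,dF_0$. You instead use the elementary truncation
\[
F_0(D_n) \leq T\mu(D_n) + \frac{1}{\log T}\int f_0\log_{+}f_0,
\]
tuned with $T = e^{M_n/2}$, to show that $M_n F_0(D_n)$ stays bounded. Both routes exploit $\int f_0\log_{+}f_0 < \infty$ to obtain a bound of the form $\int\log(b+f_n)\,dF_0 \leq K + \log b$ when $\sup f_n$ is large, which is exactly what is needed to contradict the hypothesis. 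Your version is more self-contained (no Orlicz-space references); the paper's yields a clean inequality valid for arbitrary small sets. One minor remark: your claim that $\int\log(b+f^*)\,dF_0 - p\log b \to +\infty$ ``by the same monotone convergence argument'' needs a line more than the full $p=1$ case, since you must also choose $\epsilon$ so that $F_0(\{f^* \geq \epsilon\}) > 1-p$ before letting $b\searrow 0$; this is routine but worth spelling out.
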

\begin{proof}
Let $\Phi:\mathbb{R}^d \rightarrow \mathbb{R}$ be the Young function $\Phi(x) = (1+|x|) \log (1+|x|) - |x|$.  The Orlicz space $L^\Phi$ is the set of (equivalence classes of) measurable functions $f:\mathbb{R}^d \rightarrow \mathbb{R}$, whose Luxemburg norm $\|f\|_\Phi$, given by
\[
\|f\|_\Phi = \inf\Bigl\{\lambda > 0: \int \Phi(|f|/\lambda) \leq 1\Bigr\},
\]
is finite.  Let $\tilde{\Phi}(y) = e^{|y|} - |y| - 1$ denote the Young conjugate of $\Phi$, and let $\|\cdot\|_{\tilde{\Phi}}$ denote the corresponding Luxemburg norm on $L^{\tilde{\Phi}}$.  Then by \citet[Proposition~1, p.58]{RaoRen1991}, and the remark following it, for $f \in L^\Phi$ and $g \in L^{\tilde{\Phi}}$, we have
\[
\int |fg| \leq 2\|f\|_{\Phi}\|g\|_{\tilde{\Phi}}.
\]
Noting that $\|f_0\|_{\Phi} < \infty$, an immediate application of this formula yields that for any Borel subset $D$ of $\mathbb{R}^d$,
\begin{equation}
\label{Eq:Holder}
\int_D f_0 \leq \frac{2\|f_0\|_{\Phi}}{-\log \mu(D)}.
\end{equation}
Now let $f$ be a log-concave density with $\sup_{x \in \mathbb{R}^d} f(x) = C \equiv e^M$.  For large $M$, we have as in the proof of Lemma~\ref{Lemma:Bound}(a) that $\mu(\{x:f(x) \geq 1\}) \leq \frac{1}{8}M^d e^{-M} \leq e^{-M/2}$.  It follows that for any $b_0 \in (0,1)$ and $b < b_0$, we have
\begin{align*}
\int \log (b+f) \, dF_0 &\leq \int \log (b_0+f) \, dF_0 \leq \log (2b_0) + \int_{f \geq b_0} \log (2f) \, dF_0 \\
&\leq 2 \log 2 + \log b_0 + \int_{f \geq 1} \log f \, dF_0 \\
&\leq 2 \log 2 + \log b_0 + \frac{2M\|f_0\|_{\Phi}}{-\log \mu(\{x:f(x) \geq 1\})} \\
&\leq 2 \log 2 + \log b_0 + 4\|f_0\|_{\Phi} \rightarrow -\infty
\end{align*}   
as $b_0 \rightarrow 0$.  Here, the penultimate inequality uses~(\ref{Eq:Holder}).  We deduce that the sequence $(f_n)$ in the statement of the lemma satisfies the condition that there exists $C \geq 1$ such that
\[
\sup_{n \in \mathbb{N}} \sup_{x \in \mathbb{R}^d} f_n(x) \leq C.
\]
Now let $S$ be a compact subset of $\mathrm{int}(E)$.  Find $\delta > 0$ such that $S^\delta \subseteq \mathrm{int}(E)$ and, as in the proof of Lemma~\ref{Lemma:Bound}(b), find $p > 0$ such that $\int_B f_0 \geq p$ for all Borel subsets $B$ of $\mathbb{R}^d$ that contain a ball of radius $\delta/2$ centered at a point in $S^{\delta/2}$.  Let $f$ be any log-concave density on $\mathbb{R}^d$ with $\sup_{x \in \mathbb{R}^d} f(x) \leq C$, and write $c = 2\inf_{x \in S} f(x)$.  If $c \in [0,C]$ is sufficiently small, then we can find $b_0 > 0$ small enough that 
\[
p\log (b_0+c) + (1-p)\log(b_0+C) \leq \int \log f^* \, dF_0 - 1.
\]
Then writing $B = \{x:f(x) \leq c\}$, we have for all $b \in (0,b_0)$ that
\[
\int \log (b+f) \, dF_0 \leq \int_B \log (b_0 + c) \, dF_0 + \int_{B^c} \log (b_0 + C) \, dF_0 \leq \int \log f^* \, dF_0 - 1.
\]
We deduce that there exists $c > 0$ such that  
\begin{equation}
\label{Eq:Lowerbound}
\liminf_{n \rightarrow \infty} \inf_{x \in \mathrm{conv} \, S} f_n(x) \geq c.
\end{equation}
As in the proof of Theorem~\ref{Thm:Main}, we have from~(\ref{Eq:Lowerbound}) that the sequence $(f_n)$ is tight.  Thus if $(f_{n_k})$ is an arbitrary subsequence of $(f_n)$, then there exists a further subsequence $(f_{n_{k(l)}})$ and a log-concave density $f$ such that $\int |f_{n_{k(l)}} - f| \rightarrow 0$.  But then, by the dominated and monotone convergence theorems,
\begin{align*}
\limsup_{b \searrow 0} \limsup_{l \rightarrow \infty} \int_{\mathbb{R}^d} \log \biggl(\frac{b+f^*}{b+f_{n_{k(l)}}}\biggr) \, dF_0 &= \limsup_{b \searrow 0} \int_{\mathbb{R}^d} \log \biggl(\frac{b+f^*}{b+f}\biggr) \, dF_0 \\
&= \int_{\mathbb{R}^d} \log \frac{f^*}{f} \, dF_0 \geq 0,
\end{align*}
with equality if and only if $f=f^*$ almost everywhere.  By the hypothesis of the lemma, we must have $\int |f_{n_{k(l)}} - f^*| \rightarrow 0$.  Since every subsequence of $(f_n)$ has a further subsequence converging in total variation norm to $f^*$, we must have $\int |f_n - f^*| \rightarrow 0$.
\end{proof}

\textbf{Acknowledgements:} The second author is very grateful for helpful conversations with Richard Nickl and Michael Tehranchi.

\end{document}